
\documentclass{amsart}

\usepackage{amsfonts, amsmath, amscd}
\usepackage[psamsfonts]{amssymb}

\usepackage{amssymb}

\usepackage[usenames]{color}



\newtheorem{theorem}{Theorem}[section]
\newtheorem{lemma}[theorem]{Lemma}
\newtheorem{corollary}[theorem]{Corollary}

\newtheorem{proposition}[theorem]{Proposition}

\numberwithin{equation}{section}

\newcommand{\CC}{C_k}
\newcommand{\NN}{\mathbb{N}}

\newcommand{\w}{\omega}

\newcommand{\TTT}{\mathcal{T}}

\newcommand{\IR}{\mathbb{R}}

\renewcommand{\phi}{\varphi}

\newcommand{\U}{\mathcal U}



\input xy
\xyoption{all}

\title[Topological properties of strict $(LF)$-spaces]{Topological properties of strict $(LF)$-spaces and strong duals of Montel  strict $(LF)$-spaces}

\author[S. S.~Gabriyelyan]{Saak Gabriyelyan}
\address{Department of Mathematics, Ben-Gurion University of the
Negev, Beer-Sheva, P.O. 653, Israel}
\email{saak@math.bgu.ac.il}

\subjclass[2000]{Primary 46A13; Secondary 46A11, 22A05}

\keywords{the Ascoli property, strict $(LF)$-space, Montel space, inductive limit of metrizable groups}

\begin{document}

\begin{abstract}
Following \cite{BG}, a Tychonoff space $X$ is Ascoli if every compact subset of $C_k(X)$ is equicontinuous. By the classical Ascoli theorem every $k$-space is Ascoli. We show that a  strict $(LF)$-space $E$ is Ascoli iff $E$ is a Fr\'{e}chet space or $E=\phi$. We prove that the strong dual $E'_\beta$ of a Montel strict $(LF)$-space $E$ is an Ascoli space iff one of the following assertions holds: (i) $E$ is a Fr\'{e}chet--Montel  space, so $E'_\beta$ is a sequential non-Fr\'{e}chet--Urysohn  space, or (ii) $E=\phi$, so  $E'_\beta= \mathbb{R}^\w$. Consequently, the space $\mathcal{D}(\Omega)$ of test functions and the space of distributions $\mathcal{D}'(\Omega)$ are not Ascoli that strengthens results of Shirai \cite{Shirai} and Dudley \cite{Dudley-71}, respectively.
\end{abstract}

\maketitle



\section{Introduction. }


The class of strict $(LF)$-spaces was intensively studied in the classic paper of Dieudonn\'{e} and Schwartz \cite{DieS}.  It turns out that many of strict $(LF)$-spaces, in particular a lot of linear spaces considered in Schwartz's theory of distributions \cite{Schwartz},  are not metrizable. Even the simplest $\aleph_0$-dimensional strict $(LF)$-space $\phi$, the inductive limit of the sequence $\{ \IR^n\}_{n\in\w}$, is not metrizable. Nyikos \cite{nyikos} showed that $\phi$ is a sequential  non-Fr\'{e}chet--Urysohn  space (all relevant definitions are given in the next section).
On the other hand, Shirai \cite{Shirai} proved  the space $\mathcal{D}(\Omega)$ of test functions over an open subset $\Omega$ of $\IR^n$, which is one of the most famous  example of  strict $(LF)$-spaces, is not sequential. These results motivate the study of sequential properties of strict $(LF)$-spaces and more generally of $(LM)$-spaces.   Sequential properties  of $(LM)$-spaces  were studied by Dudley in \cite{Dudley-64}. Webb \cite{Webb} and K\c{a}kol and Saxon  \cite{kaksax}  proved the following remarkable result:
\begin{theorem}[\cite{kaksax,Webb}] \label{t:kak-sax}
An $(LM)$-space $E$ is a $k$-space if and only if $E$ is sequential if and only if $E$ is metrizable or is a Montel $(DF)$-space.
\end{theorem}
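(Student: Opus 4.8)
The three properties are to be shown equivalent, and it is convenient to prove the cycle: metrizable $\Rightarrow$ sequential, Montel $(DF)$ $\Rightarrow$ sequential, sequential $\Rightarrow$ $k$-space, and $k$-space $\Rightarrow$ (metrizable or Montel $(DF)$). The implication ``sequential $\Rightarrow$ $k$-space'' is a general topological fact, and ``metrizable $\Rightarrow$ sequential'' holds because a metrizable space is first countable, hence Fr\'echet--Urysohn and a fortiori sequential. Thus the whole statement reduces to two nontrivial claims: that a Montel $(DF)$-space is sequential, and that an $(LM)$-space which is a $k$-space is either metrizable or a Montel $(DF)$-space.

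For the first claim, let $E$ be a Montel $(DF)$-space and fix a fundamental sequence $(B_n)_{n\in\w}$ of closed absolutely convex bounded sets with $B_n \subseteq B_{n+1}$. Since $E$ is Montel each $B_n$ is compact, and since $E$ is a $(DF)$-space bounded sets are metrizable, so each $B_n$ is a compact metrizable space; moreover every compact (indeed every bounded) subset of $E$ lies in some $B_n$, so $E$ is hemicompact. The plan is to show that $E$ carries the final topology determined by the inclusions $B_n \hookrightarrow E$, i.e.\ that $E = \varinjlim B_n$ is a $k_\w$-space: the key point is the $(DF)$-property, which guarantees that an absolutely convex set $U$ is a neighbourhood of $0$ as soon as each trace $U \cap B_n$ is a neighbourhood of $0$ in $B_n$. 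Granting this, $E$ is the inductive limit of a sequence of compact metrizable spaces, and such a $k_\w$-space is sequential by a standard diagonal argument on convergent sequences. This yields Montel $(DF)$ $\Rightarrow$ sequential.

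The substantial part is the converse for $(LM)$-spaces. Write $E = \varinjlim E_n$ with $(E_n)$ an increasing sequence of metrizable locally convex spaces, assume $E$ is a $k$-space but not metrizable, and let the goal be to force $E$ to be a Montel $(DF)$-space. I would argue contrapositively by producing, whenever $E$ fails to be Montel $(DF)$ and is non-metrizable, a set $A \subseteq E$ that meets every compact set in a closed set but is not closed. The construction is an array $\{x_{n,m} : n,m \in \w\}$ straddling the levels: for each fixed $n$ one chooses a bounded sequence $(x_{n,m})_m$ lying in level $E_n$, and then forms $A$ from a suitable diagonal tail of the array so that $0 \in \overline{A}$ because every absolutely convex neighbourhood of $0$ in the inductive limit must capture points from infinitely many levels, while at the same time $A \cap K$ is finite for each compact $K$. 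The two defects split into the two necessity statements: if $E$ is not Montel, some bounded set is non-compact, and (after reducing to a reduced, regular defining sequence, so that bounded sets are confined to a single level) one finds a bounded sequence in some $E_n$ with no convergent subsequence, which feeds the array; if $E$ fails the $(DF)$-property it has no fundamental sequence of bounded sets, and one instead spreads the array across cofinally many genuinely different levels. In either case $A$ witnesses the failure of the $k$-space property, and the contrapositive gives that a non-metrizable $(LM)$-space which is a $k$-space must be Montel $(DF)$.

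The hard part is this last construction, and in particular verifying the two dual facts that make it run: that $0$ really lies in the closure of the diagonal tail $A$ (a statement about the inductive-limit neighbourhood filter, where one exploits that a basic neighbourhood of $0$ has the form $\conv\big(\bigcup_n V_n\big)$ with $V_n$ a neighbourhood in $E_n$), and that $A$ nevertheless meets each compact set finitely (where one uses that a compact, hence bounded, subset of the regular limit is confined to a single metrizable level and that there the relevant subfamily of $A$ is closed and discrete). Controlling the interplay between these two requirements --- choosing the tail function $m = f(n)$ so that both hold simultaneously --- is the crux of the argument and is exactly where the Montel and $(DF)$ hypotheses enter; everything else is bookkeeping about regular inductive limits and the permanence of metrizability of bounded sets.
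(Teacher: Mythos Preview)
The paper does not prove Theorem~\ref{t:kak-sax} at all: it is quoted from the literature (Webb~\cite{Webb} for the Montel $(DF)\Rightarrow$ sequential direction, and K\c{a}kol--Saxon~\cite{kaksax} for the converse characterisation of $(LM)$-spaces), and no argument is given in the text. So there is no ``paper's own proof'' to compare your proposal against.

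That said, your sketch contains a genuine gap in the Montel $(DF)\Rightarrow$ sequential step. You assert that the $(DF)$-property guarantees that an absolutely convex set $U$ is a $0$-neighbourhood as soon as each trace $U\cap B_n$ is a $0$-neighbourhood in $B_n$; this is not what the $(DF)$-axiom says. The $(DF)$-condition is about countable intersections of $0$-neighbourhoods which absorb bounded sets, not about traces on bounded sets. What you actually need is that $E$ carries the finest locally convex topology agreeing with the given one on each $B_n$, and this does not follow from $(DF)$ alone---one has to use the Montel property (so that $E$ is reflexive, hence the strong dual of a Fr\'echet--Montel space) together with the Banach--Dieudonn\'e theorem, exactly as in the paper's Proposition~\ref{p:dual-non-FU-MK}. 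Webb's original argument goes through this dual description rather than working intrinsically in $E$.

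For the hard direction ($k$-space $\Rightarrow$ metrizable or Montel $(DF)$) your outline is plausible in spirit---build an array across the levels and extract a diagonal tail $A$ with $0\in\overline{A}$ but $A\cap K$ finite for all compact $K$---but it remains a plan rather than a proof. The delicate point you flag, namely choosing the tail so that both requirements hold simultaneously, is precisely where the K\c{a}kol--Saxon argument does real work (controlling regularity of the defining sequence and separating the non-Montel and non-$(DF)$ cases), and your sketch does not yet indicate how that control is obtained.
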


Being motivated by the Ascoli theorem we introduced  in \cite{BG}  a new class of topological spaces, namely, the class of Ascoli spaces. A Tychonoff space $X$ is {\em Ascoli} if and only if every compact subset of $\CC(X)$ is equicontinuous, where $\CC(X)$ is the space $C(X)$ of all real-valued continuous functions on $X$ endowed with the compact-open topology. By Ascoli's theorem \cite{Eng}, every $k$-space is an Ascoli space. So we have the following diagram
\smallskip
\[
\xymatrix{
\mbox{metric} \ar@{=>}[r] & {\mbox{Fr\'{e}chet--}\atop\mbox{Urysohn}} \ar@{=>}[r] & \mbox{sequential} \ar@{=>}[r] &  \mbox{$k$-space} \ar@{=>}[r] &  {\mbox{Ascoli}\atop\mbox{space}}, }
\]
and none of these implications is reversible.  The Ascoli property for function spaces and  Banach spaces and their closed unit balls with the weak topology has been studied recently  in \cite{Banakh-Survey,Gabr-C2,Gabr-Ascoli-LCS,GGKZ,GGKZ-2,GKP}. Taking into account Theorem \ref{t:kak-sax} it is natural to consider the following question: Which $(LM)$-spaces are Ascoli?
For strict $(LF)$-spaces we obtain a complete answer.
\begin{theorem} \label{t:Ascoli-strict-LF}
A strict $(LF)$-space $E$ is Ascoli if and only if $E$ is a Fr\'{e}chet space or $E=\phi$.
\end{theorem}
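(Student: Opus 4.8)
The plan is to prove both implications, the reverse one being routine and the forward one carrying all the difficulty. For the reverse direction, if $E$ is a Fréchet space then $E$ is metrizable, hence first countable, hence a $k$-space, and so Ascoli by the chain of implications recorded in the diagram above; and if $E=\phi$ then $\phi$ is sequential by Nyikos' theorem (equivalently, it is a Montel $(DF)$-space, hence a $k$-space by Theorem \ref{t:kak-sax}), and therefore Ascoli as well. It then remains to prove the forward implication in contrapositive form: if a strict $(LF)$-space $E=\varinjlim_n E_n$ is neither a Fréchet space nor equal to $\phi$, then $E$ is not Ascoli. Since $E$ is not Fréchet the defining sequence does not stabilize, so after passing to a subsequence I may assume every inclusion $E_n\subsetneq E_{n+1}$ is proper and closed; since $E\neq\phi$ not every step is finite dimensional, and as closed subspaces of Fréchet spaces are Fréchet and infinite dimensionality is inherited upward, I may further assume that $E_1$ is an infinite-dimensional Fréchet space.

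The strategy for the contrapositive is to isolate a canonical non-Ascoli building block inside $E$. First I would record the stability fact that the Ascoli property passes to retracts: if $r\colon X\to Y$ is a retraction then $f\mapsto f\circ r$ embeds $\CC(Y)$ continuously into $\CC(X)$, sends compact sets to compact sets, and transports equicontinuity back along $r$, so $Y$ is Ascoli whenever $X$ is; in particular the Ascoli property is inherited by complemented subspaces, a continuous linear projection being a retraction. I would then prove a structural lemma: a strict $(LF)$-space normalised as above contains a complemented subspace topologically isomorphic to $F\times\phi$ for some infinite-dimensional Fréchet space $F$. Here the $\phi$-factor is produced from the proper closed inclusions $E_n\subsetneq E_{n+1}$, by choosing $x_n\in E_{n+1}\setminus E_n$ and, via Hahn--Banach in the Fréchet space $E_{n+1}$, continuous functionals $\xi_n$ with $\xi_n|_{E_n}=0$ and $\xi_n(x_n)=1$, while the $F$-factor comes from the infinite-dimensional step $E_1$. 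Arranging these into a genuine topological complement is the delicate structural point, since closed subspaces of Fréchet spaces need not be complemented and one must use the strict inductive structure to extend the functionals coherently to all of $E$.

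The heart of the matter is the assertion that $F\times\phi$ is not Ascoli whenever $F$ is an infinite-dimensional metrizable locally convex space; together with the two lemmas this yields the theorem. The mechanism is that such an $F$ is not locally compact, so it carries a bounded sequence with no convergent subsequence, while compact subsets of $F\times\phi$ are confined, by the Dieudonné--Schwartz description of bounded sets in strict inductive limits, to the steps $F\times\RR^n$, on which only finitely many $\phi$-coordinates survive. Exploiting this I would construct a compact set $\KK\subseteq\CC(F\times\phi)$ that fails to be equicontinuous at the origin. I expect this to be the main obstacle, and to be genuinely subtle: a naive convergent sequence of functions can always be tamed by shrinking a single neighbourhood simultaneously in the $F$- and $\phi$-directions, so the witnessing family $\KK$ must be built non-sequentially, combining the non-Fréchet--Urysohn behaviour of $\phi$ (a point in the closure of a set that is reached by no sequence) with the failure of local compactness of $F$, and its non-equicontinuity must be checked against every neighbourhood of $0$ at once. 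The space $\DD(\Omega)$ is then the special instance in which the steps are the nuclear Fréchet spaces $\DD_K$, giving the non-Ascoli conclusion announced in the abstract.
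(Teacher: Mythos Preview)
Your reverse implication is fine and matches the paper. For the forward implication, however, your route is quite different from the paper's and contains a genuine gap.

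The paper does not pass through any complemented model subspace. It proves directly (Proposition~\ref{p:Ascoli-inductive}) that if $X=\underrightarrow{\lim}\,X_n$ is an inductive limit of metrizable groups with each $X_n$ closed and non-open in $X_{n+1}$ and $X$ is Ascoli, then every $X_n$ is locally compact. The argument is a bare-hands application of the non-Ascoli criterion Proposition~\ref{p:Ascoli-sufficient}: from the failure of local compactness of $X_0$ one extracts, inside successive annuli $\overline{B_{n_k,0}}\setminus B_{n_k+1,0}$, locally finite families $\{W_{n,k}\}$ of open sets with chosen points $x_{n,k}$; one then perturbs each $x_{n,k}$ by an element $y_{n,k+1}\in X_{k+1}\setminus X_k$ tending to $e$, and checks that the resulting points $a_{n,k}$ and neighbourhoods $U_{n,k}$ satisfy (i)--(iii) of Proposition~\ref{p:Ascoli-sufficient}. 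The strict $(LF)$ case then follows in one line, since a locally convex space is locally compact iff it is finite-dimensional.

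Your structural lemma is the gap. It is true and easy that a proper strict $(LF)$-space splits as $\phi\oplus M$ via your biorthogonal system $(x_n,\xi_n)$, and that $E_1\subseteq M$. But to get a complemented $F\times\phi$ with $F$ infinite-dimensional Fr\'echet you need an infinite-dimensional Fr\'echet subspace complemented \emph{in $M$}, and $M$ is in general again a proper strict $(LF)$-space in which $E_1$ has no reason to be complemented; you acknowledge this is ``the delicate structural point'' but give no mechanism to secure it, and I do not see one. Even if the lemma were true, note that your declared ``main obstacle''---showing $F\times\phi$ is not Ascoli for infinite-dimensional metrizable $F$---is exactly the special case $X_n=F\times\IR^n$ of Proposition~\ref{p:Ascoli-inductive}, so the reduction buys nothing: the same equicontinuity-breaking family has to be constructed either way, and the paper simply builds it in $E$ itself rather than first carving out a product.
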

In particular, $\mathcal{D}(\Omega)$ is not an Ascoli space that strengthens Shirai's result.

Antedating the Nyikos result, Yoshinaga \cite{Yoshi} showed that every strong dual of a Fr\'{e}chet--Schwartz space is sequential. Webb \cite{Webb} extended this result to strong duals of Fr\'{e}chet--Montel spaces (equivalently, to Montel $(DF)$-spaces, see Theorem \ref{t:kak-sax}). Let us recall that a locally convex space $E$ is called {\em semi-Montel} if every bounded subset of $E$ is relatively compact, and $E$ is a {\em Montel space} if it is a barrelled semi-Montel space. Since every Montel space is reflexive, these results motivate the following problem: Characterize strong duals of Montel spaces which are Ascoli. Note that this problem is quite natural for Montel spaces. Indeed, if $E$ is a Montel space, then: (1) each compact subset of the strong dual space $E'_\beta$  of $E$ is equicontinuous, and (2) the strong topology $\beta(E',E)$ coincides with the compact-open topology on $E'$ and therefore $E'_\beta$ is a closed subspace of $\CC(E)$. So the above problem can be reformulated in a more general form as follows: Let $E$ be a locally convex space and $\tau_k$ be the compact-open topology on the dual space $E'$.  When the equicontinuity of the compact subsets of the space $(E',\tau_k)$ implies the equicontinuity of the compact subsets of its `functional envelope' $\CC(E)$? Proposition \ref{p:strong-dual-Ascoli} below gives a partial answer to this question and plays a crucial role in the proof of our second main result, see Theorem \ref{t:strong-dual-Ascoli}.

By Theorem \ref{t:kak-sax}, the strong dual $E'_\beta$ of an infinite-dimensional Fr\'{e}chet--Montel space $E$ is sequential, and Webb \cite{Webb} also has shown  that $E'_\beta$ is not a Fr\'{e}chet--Urysohn space (below we generalize these results, see Proposition \ref{p:dual-non-FU-MK}). However, it seems a little known about topological properties of strong duals of proper Montel strict $(LF)$-spaces. To the best of our knowledge  only one nontrivial result is  known:  Dudley in \cite{Dudley-71} has shown that the strong dual $\mathcal{D}'(\Omega)$ of $\mathcal{D}(\Omega)$, the space of distributions, is not sequential. In the next theorem we  eliminate this gap  and strengthen  Dudley's result.
\begin{theorem}  \label{t:strong-dual-Ascoli}
Let $E$ be a Montel strict $(LF)$-space. Then the strong dual $E'_\beta$ of $E$ is an Ascoli space if and only if one of the following assertions holds: (i) $E$ is a Fr\'{e}chet--Montel  space, so $E'_\beta$ is a sequential non-Fr\'{e}chet--Urysohn  $\mathcal{MK}_\omega$-space, or (ii) $E=\phi$, so  $E'_\beta= \IR^\w$.
\end{theorem}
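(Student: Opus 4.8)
The plan is to prove the theorem by separating the two directions and leaning heavily on the structure theory of strict $(LF)$-spaces together with the classification in Theorem \ref{t:kak-sax}. Recall that a Montel strict $(LF)$-space $E = \varinjlim E_n$ is the inductive limit of an increasing sequence of Fr\'{e}chet spaces $E_n$, with $E_n$ a proper closed subspace of $E_{n+1}$ in the proper case. Since $E$ is Montel, each step space $E_n$ is a closed subspace and is itself Fr\'{e}chet--Montel (Montelity passes to closed subspaces of the inductive limit here), and the strong dual $E'_\beta = \varprojlim (E_n)'_\beta$ is a reduced projective limit of the Montel $(DF)$-spaces $(E_n)'_\beta$. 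The two asserted cases, $E$ Fr\'{e}chet--Montel and $E = \phi$, are exactly the two ``degenerate'' ends of this picture: the Fr\'{e}chet case is when the sequence stabilizes (only finitely many genuine steps), and $E = \phi$ is the minimal proper infinite-step example $\varinjlim \IR^n$.

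For the two ``if'' directions I would argue as follows. In case (i), if $E$ is Fr\'{e}chet--Montel then $E'_\beta$ is a Montel $(DF)$-space, so by Theorem \ref{t:kak-sax} it is sequential, hence a $k$-space, hence Ascoli by the Ascoli theorem cited in the introduction; the non-Fr\'{e}chet--Urysohn assertion and the $\mathcal{MK}_\omega$ property I would obtain by invoking the generalization promised in the excerpt as Proposition \ref{p:dual-non-FU-MK} (Webb's result and its extension). In case (ii), I would identify $E = \phi$ explicitly: its strong dual is $\IR^\w$, which is a metrizable (indeed Fr\'{e}chet) space and therefore trivially Ascoli. So both ``if'' directions reduce to known facts once the identification of the dual is made.

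The substance of the theorem is the ``only if'' direction, so the plan is to show that if $E$ is a proper Montel strict $(LF)$-space that is neither Fr\'{e}chet--Montel nor $\phi$, then $E'_\beta$ fails to be Ascoli. Here I would use the stated Proposition \ref{p:strong-dual-Ascoli} as the crucial tool: it transfers the question from equicontinuity inside $(E',\tau_k)$ (which is automatic, since each compact set of $E'_\beta$ is equicontinuous for Montel $E$) to equicontinuity of compact subsets of the functional envelope $\CC(E)$, i.e.\ to the Ascoli property of $E$ itself. But Theorem \ref{t:Ascoli-strict-LF} already tells us that a strict $(LF)$-space $E$ is Ascoli precisely when $E$ is Fr\'{e}chet or $E = \phi$; so for a proper Montel $E$ that is not $\phi$ (hence infinite-dimensional at each genuine step and not Fr\'{e}chet), $E$ is not Ascoli, and I would push this non-Ascoli witness through Proposition \ref{p:strong-dual-Ascoli} to exhibit a compact non-equicontinuous subset of $\CC(E')$, showing $E'_\beta$ is not Ascoli. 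The delicate point is to verify that the hypotheses of Proposition \ref{p:strong-dual-Ascoli} genuinely apply to $E'_\beta$ and that the reflexivity of Montel spaces ($E'' = E$) lets me close the loop between the Ascoli property of $E'_\beta$ and that of its own dual $E$.

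The main obstacle I expect is precisely this transfer step. Establishing that $E'_\beta$ is not Ascoli is not simply a matter of quoting Theorem \ref{t:Ascoli-strict-LF} about $E$; one must construct an explicit compact, non-equicontinuous set in $\CC(E'_\beta)$, and the natural source of such a set is a suitable convergent-sequence-type or $\mathcal{MK}_\omega$-diagonal configuration in $E$ transported into the bidual. The technical heart will be controlling the strong-dual topology $\beta(E'',E')$ and checking that the witnessing family built from the step spaces $E_n$ remains compact while losing equicontinuity — this is where the properness of the $(LF)$-sequence (infinitely many strictly increasing infinite-dimensional steps, as opposed to the scalar steps of $\phi$) must be used in an essential way. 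I would isolate this construction as the key lemma and treat the reflexivity and the two boundary cases as comparatively routine consequences of the general theory.
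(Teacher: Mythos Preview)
Your handling of the ``if'' direction is fine, but your ``only if'' direction rests on a misreading of Proposition \ref{p:strong-dual-Ascoli}. That proposition is \emph{not} a transfer principle relating the Ascoli property of $E$ to that of $E'_\beta$; it is a self-contained result which, via the explicit family built in its proof and Proposition \ref{p:Ascoli-sufficient}, shows directly that $(E',\tau_k)$ fails to be Ascoli whenever the bottom step $E_0$ is an infinite-dimensional barrelled metrizable space with $(E'_0,\tau_k)$ barrelled. The paper's proof of Theorem \ref{t:strong-dual-Ascoli} never invokes Theorem \ref{t:Ascoli-strict-LF}, and there is no general implication ``$E$ not Ascoli $\Rightarrow$ $E'_\beta$ not Ascoli'' available even in the Montel setting. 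Your plan to ``push the non-Ascoli witness through reflexivity'' is therefore not supported by the tools at hand; what you call ``the delicate point'' is in fact the entire argument, and Proposition \ref{p:strong-dual-Ascoli} does not supply it in the form you imagine.

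The actual route is short once Proposition \ref{p:strong-dual-Ascoli} is understood correctly. If $E$ is proper and some $E_{n}$ (say $E_0$) is infinite-dimensional, then $E_0$ is Fr\'echet, hence barrelled and metrizable; being a closed subspace of the semi-Montel space $E$ it is semi-Montel, hence Montel. Consequently $(E'_0)_\beta$ is Montel, in particular barrelled, and since $E_0$ is Montel one has $\tau_k=\beta(E'_0,E_0)$ on $E'_0$. These are exactly the hypotheses of Proposition \ref{p:strong-dual-Ascoli}, which then yields that $(E',\tau_k)=E'_\beta$ is not Ascoli. The step you are missing is this verification that $(E'_0,\tau_k)$ is barrelled, together with the recognition that the non-equicontinuous compact witness in $C_k(E'_\beta)$ is already constructed \emph{inside} the proof of Proposition \ref{p:strong-dual-Ascoli}; it is not obtained by transporting anything from $C_k(E)$.
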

Consequently, the space of distributions $\mathcal{D}'(\Omega)$ is not Ascoli. For another topological properties of $\mathcal{D}'(\Omega)$ see \cite{GK-GMS2}.


\section{ Definitions and auxiliary  results } \label{sec-1}


All topological spaces in the article are assumed to be Hausdorff. Let us recall some basic definitions. A topological space $X$ is called
\begin{itemize}
\item[$\bullet$] {\em Fr\'{e}chet-Urysohn} if for any cluster point $a\in X$ of a subset $A\subseteq X$ there is a sequence $\{ a_n\}_{n\in\NN}\subseteq A$ which converges to $a$;
\item[$\bullet$] {\em sequential} if for each non-closed subset $A\subseteq X$ there is a sequence $\{a_n\}_{n\in\NN}\subseteq A$ converging to some point $a\in \bar A\setminus A$;
\item[$\bullet$] a {\em $k$-space} if for each non-closed subset $A\subseteq X$ there is a compact subset $K\subseteq X$ such that $A\cap K$ is not closed in $K$.
\end{itemize}

Let $\{ (X_n,\tau_n )\}_{n\in\w}$ be a sequence of topological spaces such that $X_n \subseteq X_{n+1}$ and $\tau_{n+1} |_{X_n} = \tau_n$ for all $n\in\w$.  The union $X=\bigcup_{n\in\w} X_n$ with the {\em weak topology} $\tau$ (i.e., $U\in\tau$ if and only if $U\cap X_n \in\tau_n$ for every $n\in\w$) is called the {\em inductive limit} of the sequence $\{ (X_n,\tau_n )\}_{n\in\w}$ and it is denoted by $(X,\tau)= \underrightarrow{\lim} (X_n,\tau_n )$. If $X_n$ is closed in $X_{n+1}$ for every $n\in\w$, then, clearly, $X_n$ is a closed subspace of $X$. A topological space $X$ is called a $k_\omega$-{\em space} (an {\em $\mathcal{MK}_\omega$-space}) if it is the inductive limit of an increasing sequence $\{ C_n\}_{n\in\w}$ of its (respectively, metrizable) compact subsets. So $\phi= \underrightarrow{\lim}\, \IR^n$ is an $\mathcal{MK}_\omega$-space.
In \cite[Lemma 9.3]{Ste} Steenrod proved the following useful result.
\begin{proposition} \label{p:limit-omega-compact}
If $K$ is a compact subset of $(X,\tau)= \underrightarrow{\lim} (X_n,\tau_n )$, then $K\subseteq X_n$ for some $n\in\NN$.
\end{proposition}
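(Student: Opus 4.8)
The plan is to argue by contradiction and to exhibit, under the assumption that $K$ is contained in no single $X_n$, an infinite subset $D\subseteq K$ that is simultaneously closed and discrete in $X$; since a closed subset of the compact set $K$ is compact while an infinite discrete space is never compact, this contradiction will force $K\subseteq X_n$ for some $n$. The only structural inputs I expect to use are the definition of the weak topology $\tau$ (so that a set is closed in $X$ precisely when its trace on each $X_n$ is closed in $(X_n,\tau_n)$) and the standing Hausdorff assumption (so that finite sets are closed in each $X_n$). Notably, the hypothesis that $X_n$ be closed in $X_{n+1}$ should not be needed for this particular statement.

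For the construction, suppose $K\not\subseteq X_n$ for every $n\in\NN$. I would pick the witnessing points inductively. Choose $x_1\in K\setminus X_1$; since $X=\bigcup_{n\in\w}X_n$, we have $x_1\in X_{m_1}$ for some index $m_1>1$. Having chosen $x_k\in X_{m_k}$, use $K\not\subseteq X_{m_k}$ to select $x_{k+1}\in K\setminus X_{m_k}$ and then an index $m_{k+1}>m_k$ with $x_{k+1}\in X_{m_{k+1}}$. This produces a strictly increasing sequence $m_1<m_2<\cdots$ and points with $x_k\in X_{m_k}\setminus X_{m_{k-1}}$ (setting $m_0=1$). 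Because the $X_n$ are increasing, $x_j\in X_{m_{k-1}}$ for every $j<k$ while $x_k\notin X_{m_{k-1}}$, so the $x_k$ are pairwise distinct and $D:=\{x_k:k\in\NN\}$ is infinite.

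The decisive step is to check that $D\cap X_m$ is finite for each $m$. Indeed, if $m\le m_{k-1}$ then $X_m\subseteq X_{m_{k-1}}$, and since $x_k\notin X_{m_{k-1}}$ we get $x_k\notin X_m$; hence $x_k\in X_m$ can hold only for the finitely many $k$ with $m_{k-1}<m$. As $X_m$ is Hausdorff, the finite set $D\cap X_m$ is closed in $(X_m,\tau_m)$, and by the definition of the weak topology $D$ is therefore closed in $X$. Exactly the same computation applies to any subset $D'\subseteq D$, whose trace $D'\cap X_m$ is also finite; thus every subset of $D$ is closed in $X$, which means $D$ is a closed discrete subspace of $X$. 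Since $D\subseteq K$ and $D$ is closed in $X$, $D$ is a closed subset of the compact space $K$ and hence compact; but an infinite discrete space cannot be compact. This contradiction completes the proof, and I expect the main (and rather modest) obstacle to be nothing more than correctly bookkeeping the indices $m_k$ so that the traces $D\cap X_m$ come out finite.
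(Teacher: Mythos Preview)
Your argument is correct and is essentially the standard proof of this fact. Note that the paper does not actually supply its own proof of this proposition; it simply attributes the result to Steenrod \cite[Lemma~9.3]{Ste}, whose argument is along the same lines as yours.
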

In what follows we shall omit $\tau_n$ and write simply $X= \underrightarrow{\lim} X_n$.

\begin{proposition} \label{p:MKw-sequential}
If a topological group $(G,\tau)$ is an $\mathcal{MK}_\omega$-space, then it is either a locally compact metrizable group or is a sequential non-Fr\'{e}chet--Urysohn space.
\end{proposition}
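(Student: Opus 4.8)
The plan is to prove a sharper statement from which the dichotomy follows at once. Write $G=\underrightarrow{\lim}\,C_n$ with each $C_n$ a metrizable compact subset, let $e$ be the identity, and (discarding finitely many terms) assume $e\in C_n$ for all $n$. I will split according to whether or not some $C_n$ is a neighbourhood of $e$. First, \emph{$G$ is always sequential}: the topological sum $S=\bigsqcup_{n\in\w}C_n$ is metrizable, being a countable sum of metrizable spaces, and the natural surjection $q\colon S\to G$ induced by the inclusions $C_n\hookrightarrow G$ is a quotient map, since by definition $U\subseteq G$ is open iff $U\cap C_n$ is open in $C_n$ for every $n$, iff $q^{-1}(U)$ is open in $S$. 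Thus $G$ is a quotient of a metrizable space and hence sequential.

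\textbf{Case 1: some $C_{n_0}$ is a neighbourhood of $e$.} Then $C_{n_0}$ is a compact neighbourhood of $e$, so by homogeneity $G$ is locally compact. Moreover $\Int_G(C_{n_0})$ is an open neighbourhood of $e$ carrying the subspace topology inherited from the metrizable space $C_{n_0}$, so $e$ has a countable neighbourhood base in $G$; by the Birkhoff--Kakutani theorem a first-countable topological group is metrizable, whence $G$ is locally compact metrizable. This is the first alternative.

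\textbf{Case 2: no $C_n$ is a neighbourhood of $e$.} Here the goal is to show $G$ is not Fr\'echet--Urysohn; together with sequentiality this yields the second alternative. The key reduction is via Proposition \ref{p:limit-omega-compact}: if a sequence in $G$ converges to $e$, then the sequence together with $e$ is compact and hence lies in some $C_n$. Consequently, for a set $A\subseteq G$ with $e\notin A$, \emph{no sequence in $A$ converges to $e$ provided $e\notin\overline{A\cap C_n}$ for every $n$} (for the converse one uses that each $C_n$, being metrizable, is first-countable). Hence it suffices to construct $A\subseteq G$ with $e\in\overline A$ but $e\notin\overline{A\cap C_n}$ for all $n$: such an $A$ witnesses the failure of the Fr\'echet--Urysohn property at $e$.

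The construction of $A$ is the step I expect to be the main obstacle. The model is Nyikos's argument that $\phi=\underrightarrow{\lim}\,\IR^n$ is not Fr\'echet--Urysohn: one places at each level $\IR^m$ a countable set dense in a genuinely $m$-dimensional sphere of radius $\delta_m\downarrow 0$, so that it is bounded away from $0$ inside every fixed $\IR^N$ (giving $0\notin\overline{A\cap\IR^N}$), while every basic neighbourhood of $0$ --- whose trace on each level always has ``circumradius'' bounded below --- meets one of these high-level spheres (giving $0\in\overline A$). For a general $G$ one must manufacture, level by level and using that each $C_m$ fails to be a neighbourhood of $e$, analogous sets $F_m\subseteq C_{p(m)}\setminus C_m$ with $e\notin\overline{F_m}$ whose union clusters at $e$; here the group structure is essential, both to reduce the analysis to the single point $e$ by homogeneity and to transport an approaching net $g_\alpha\to e$ (with $g_\alpha\notin C_m$, available since $C_m$ is not a neighbourhood of $e$) across the levels via translations and products so as to guarantee $e\in\overline{\bigcup_m F_m}$. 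Verifying simultaneously the two requirements on $A$ --- global accumulation at $e$ together with level-wise separation from $e$ --- is exactly the delicate point, and is where the hypothesis that $G$ is a topological group, rather than a mere $\mathcal{MK}_\omega$-space, does the real work.
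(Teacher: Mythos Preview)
Your Case 2 is not a proof but a wish: you correctly identify what needs to be built --- a set $A$ with $e\in\overline{A}$ yet $e\notin\overline{A\cap C_n}$ for all $n$ --- and then admit that the construction is ``the main obstacle'' without carrying it out. The analogy with Nyikos's $\phi$ argument is suggestive but does not transfer automatically: in $\phi$ one has explicit linear coordinates and spheres, whereas here you only have an abstract tower of metrizable compacta inside a group, and the translation/product tricks you allude to are not spelled out. As written, Case~2 is a genuine gap.

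The paper avoids this difficulty entirely by outsourcing both cases to the literature. Its proof is three lines: sequentiality of $\mathcal{MK}_\omega$-spaces is folklore; if $G$ is metrizable, a lemma of Ordman on $k_\omega$-groups gives local compactness; if $G$ is not metrizable, a theorem of Shibakov (a sequential topological group with a point-countable $k$-network is metrizable iff it is Fr\'echet--Urysohn) immediately yields non-Fr\'echet--Urysohn. Your dichotomy ``some $C_n$ is a neighbourhood of $e$'' versus ``none is'' coincides with the paper's ``metrizable versus not'' (via Steenrod's lemma in one direction and your Case~1 argument in the other), and your Case~1 is correct and pleasantly self-contained. But what you are trying to do by hand in Case~2 is essentially reprove the relevant instance of the Nyikos--Shibakov theory of Fr\'echet--Urysohn topological groups, and that is not a one-paragraph job; either finish the construction explicitly or, as the paper does, cite the result.
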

\begin{proof}
It is well-known that any $\mathcal{MK}_\omega$-space is sequential. Assume that  $G$ is metrizable. Then $G$ is locally compact by Lemma 4.3 of \cite{Ordman}. If $G$ is not metrizable, then it is a sequential non-Fr\'{e}chet--Urysohn space by Theorem 2.4 of \cite{Shibakov}.
\end{proof}

We denote by $A^\circ$ the polar of a subset $A$ of a locally convex space $E$.
Proposition \ref{p:MKw-sequential} and Alaoglu's theorem and the Banach--Dieudonn\'{e} theorem imply the following example of $k_\omega$-spaces.

\begin{proposition} \label{p:dual-non-FU-MK}
Let $E$ be a metrizable infinite-dimensional locally convex space. Then $(E',\tau_k)$ is a $k_\omega$-space. If in addition $E$ is separable, then $(E',\tau_k)$ is a sequential non-Fr\'{e}chet--Urysohn  $\mathcal{MK}_\omega$-space.
\end{proposition}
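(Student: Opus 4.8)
The plan is to realize $(E',\tau_k)$ concretely as an inductive limit of a sequence of weak$^*$-compact polars and then read off both assertions from that representation. First I would fix a decreasing base $\{U_n\}_{n\in\w}$ of absolutely convex closed neighborhoods of zero in $E$, which exists because $E$ is metrizable. Passing to polars reverses inclusions, so the sets $U_n^\circ$ form an \emph{increasing} sequence in $E'$; by Alaoglu's theorem each $U_n^\circ$ is weak$^*$-compact, and since every $f\in E'$ is bounded on some $U_n$ (continuity at $0$ together with the fact that $\{U_n\}$ is a base) we get $\bigcup_{n\in\w} U_n^\circ = E'$. Moreover each $U_n^\circ$ is weak$^*$-closed, hence a closed subspace of $U_{n+1}^\circ$, and the weak$^*$ topologies are coherent with the inclusions, so the hypotheses of the inductive-limit construction are met.

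The crux is to identify the compact-open topology $\tau_k$ with the inductive limit topology of this sequence. Here I would invoke the Banach--Dieudonn\'{e} theorem: for a metrizable $E$ the topology $\tau_k$ on $E'$ coincides with the finest topology that agrees with weak$^*$ on every equicontinuous set. Now every equicontinuous $A\subseteq E'$ satisfies $A\subseteq W^\circ$ for some neighborhood $W$ of $0$, and choosing $U_m\subseteq W$ gives $A\subseteq W^\circ\subseteq U_m^\circ$; thus the family $\{U_n^\circ\}_{n\in\w}$ is cofinal in the equicontinuous sets. Consequently the Banach--Dieudonn\'{e} topology is exactly the inductive limit topology of the increasing sequence $\{(U_n^\circ,\,\text{weak}^*)\}_{n\in\w}$, so $(E',\tau_k)=\underrightarrow{\lim}\,(U_n^\circ,\text{weak}^*)$ is an inductive limit of an increasing sequence of compact sets, i.e. a $k_\omega$-space, proving the first assertion. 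I expect this identification --- checking that the Banach--Dieudonn\'{e} topology is genuinely the coherent topology over the cofinal family $\{U_n^\circ\}$ --- to be the step demanding the most care.

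For the second assertion I would add separability to upgrade ``$k_\omega$'' to ``$\mathcal{MK}_\omega$''. If $D\subseteq E$ is countable and dense, then the countable family of seminorms $f\mapsto\abs{f(x)}$, $x\in D$, separates points and induces the weak$^*$ topology on each equicontinuous set; hence every $U_n^\circ$ is weak$^*$-metrizable and $(E',\tau_k)$ is an $\mathcal{MK}_\omega$-space. Finally, $(E',\tau_k)$ is a locally convex space, in particular a topological group under addition, so Proposition \ref{p:MKw-sequential} applies and yields that it is either a locally compact metrizable group or a sequential non-Fr\'{e}chet--Urysohn space. To exclude the first alternative I would use that $E$ infinite-dimensional forces $E'$ infinite-dimensional (Hahn--Banach), and that by Riesz's theorem an infinite-dimensional topological vector space is never locally compact. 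Therefore $(E',\tau_k)$ must be a sequential non-Fr\'{e}chet--Urysohn $\mathcal{MK}_\omega$-space.
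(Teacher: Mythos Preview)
Your argument is correct and follows essentially the same route as the paper: both proofs build the increasing sequence of polars $U_n^\circ$, use Alaoglu together with the Banach--Dieudonn\'e theorem to identify $(E',\tau_k)$ with the inductive limit $\underrightarrow{\lim}\,U_n^\circ$, then use separability to metrize the weak$^*$-compact pieces and invoke Proposition~\ref{p:MKw-sequential}. The only cosmetic difference is that the paper cites K\"othe~\S21.10 and Horv\'ath~3.9.8 for the identification $\tau_k=\tau_{pc}$ and the $\tau_k$-compactness of $U_n^\circ$, whereas you argue these points directly; your explicit exclusion of the locally compact alternative via Riesz is also what the paper leaves implicit.
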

\begin{proof}
Let $\{ U_n\}_{n\in\w}$ be a decreasing base of absolutely convex neighborhoods of zero. Then $E'=\bigcup_{n\in\w} U_n^\circ$, where $U_n^\circ$ is $\sigma(E',E)$-compact by the Alaoglu theorem. Hence $U^\circ$ is  $\tau_{k}$-compact by Proposition 3.9.8 of \cite{horvath}. By  (3)  of \cite[\S 21.10]{Kothe}, $\tau_k$ coincides with the precompact topology $\tau_{pc}$ on $E'$. As every equicontinuous subset of $E'$ is contained in some  $U_n^\circ$, (4) of \cite[\S 21.10]{Kothe} implies that $(E',\tau_k)=\underrightarrow{\lim} \, U_n^\circ$. Thus $(E',\tau_k)$ is a $k_\omega$-space.

If $E$ is separable,  then $(E',\tau_k)$ admits a weaker metrizable locally convex vector topology. So every $\tau_k$-compact subset of $E'$ is metrizable, and hence  $(E',\tau_k)$  is an $\mathcal{MK}_\omega$-space. Since $E$ is infinite-dimensional, Proposition \ref{p:MKw-sequential} implies that $(E',\tau_k)$ is a sequential  non-Fr\'{e}chet--Urysohn  $\mathcal{MK}_\omega$-space.
\end{proof}

An important case of inductive limits of  sequences of topological groups is the direct sum a sequence of topological groups endowed with the box topology.
Let $\{ (G_n, \tau_n) \}_{n\in\w}$ be a sequence of  topological groups and $\mathcal{N}(G_n)$ a basis of open neighborhoods   at the identity in  $G_n$, for each $n\in\w$. The {\em direct sum} of $G_n$ is denoted by
\[
\bigoplus_{n\in\w} G_n :=\left\{ (g_n)_{n\in\w} \in \prod_{n\in\w} G_n : \; g_n = e_n \mbox{ for almost all } n \right\}.
\]
For each $n\in\w$, fix $U_n \in \mathcal{N}(G_n)$ and put
\[
 \prod_{n\in\w} U_n :=\left\{ (g_n)_{n\in\w} \in \prod_{n\in\w} G_n : \; g_n \in U_n \mbox{ for  all } n\in\w \right\}.
\]
Then the sets of the form $ \big(\bigoplus_{n\in\w} G_n\big) \cap \prod_{n\in\w} U_n $, where $U_n \in \mathcal{N}(G_n)$ for every $n\in\w$, form a basis of open neighborhoods at the identity of a topological group topology {${\mathcal T}_b$} on $\bigoplus_{n\in\w} G_n$ that is called  the \emph{box topology}.
Set $\widetilde{G}_n := \bigoplus_{i\leq n} G_i, \, n\in\w$. Then $\big(\bigoplus_{n\in\w} G_n, \mathcal{T}_b\big)=\underrightarrow{\lim}\, \widetilde{G}_n$.

Recall that a locally convex space $E$ is a {\em strict $(LF)$-space} if $E$ is the inductive limit of a sequence $\{ (E_n,\tau_n)\}_{n\in\w}$ of Fr\'{e}chet spaces considered as topological spaces, i.e., $\tau_{n+1} |_{E_n} = \tau_n$ holds for all $n\in\w$, see \cite[\S 19.4]{Kothe}. A strict $(LF)$-space is {\em proper} if a sequence $\{ (E_n,\tau_n)\}_{n\in\w}$ can be taken such that $E_n \subsetneq E_{n+1}$ for every $n\in\w$.

We shall use also the following proposition to show that a space is not Ascoli.
\begin{proposition}[\cite{GKP}] \label{p:Ascoli-sufficient}
Assume   $X$ admits a  family $\U =\{ U_i : i\in I\}$ of open subsets of $X$, a subset $A=\{ a_i : i\in I\} \subset X$ and a point $z\in X$ such that
\begin{enumerate}
\item[{\rm (i)}] $a_i\in U_i$ for every $i\in I$;
\item[{\rm (ii)}] $\big|\{ i\in I: C\cap U_i\not=\emptyset \}\big| <\infty$  for each compact subset $C$ of $X$;
\item[{\rm (iii)}] $z$ is a cluster point of $A$.
\end{enumerate}
Then $X$ is not an Ascoli space.
\end{proposition}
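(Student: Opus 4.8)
The plan is to disprove the Ascoli property directly by exhibiting a compact subset of $\CC(X)$ that fails to be equicontinuous at the cluster point $z$. The building blocks are bump functions adapted to the family $\U$. Concretely, I would use the Tychonoff property of $X$ together with (i) to choose, for each $i\in I$, a continuous function $f_i\colon X\to[0,1]$ with $f_i(a_i)=1$ and $f_i\equiv 0$ on $X\setminus U_i$; this is possible precisely because $a_i$ lies in the open set $U_i$. The candidate compact set is then $K:=\{f_i:i\in I\}\cup\{0\}$, where $0$ is the zero function.

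Next I would verify that $K$ is compact in $\CC(X)$, and here condition (ii) is decisive. Fix a compact set $C\subseteq X$ and $\eps>0$, and consider the basic neighborhood $[C,(-\eps,\eps)]=\{f\in C(X):\lvert f(x)\rvert<\eps \text{ for all } x\in C\}$ of $0$. By (ii) the index set $\{i\in I:U_i\cap C\neq\emptyset\}$ is finite, and for every other $i$ the function $f_i$ vanishes on $C$, so $f_i\in[C,(-\eps,\eps)]$. Hence every neighborhood of $0$ in the compact-open topology contains all but finitely many of the $f_i$. A routine open-cover argument now yields compactness of $K$; equivalently, one observes that $K$ is the continuous image of the one-point compactification of the discrete set $I$ under the map $i\mapsto f_i$, $\infty\mapsto 0$, which is continuous at $\infty$ by exactly the estimate just made.

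Finally I would show that $K$ is not equicontinuous at $z$, which completes the proof since an Ascoli space requires every compact subset of $\CC(X)$ to be equicontinuous. Applying (ii) to the compact singleton $C=\{z\}$, the set $I_z:=\{i\in I:z\in U_i\}$ is finite, so $f_i(z)=0$ for every $i\in I\setminus I_z$. Suppose, for contradiction, that $K$ were equicontinuous at $z$; taking $\eps=\tfrac12$ produces a neighborhood $V$ of $z$ with $\lvert f(x)-f(z)\rvert<\tfrac12$ for all $f\in K$ and all $x\in V$. By (iii) the neighborhood $V$ meets $A$ in infinitely many points, so it contains some $a_i$ with $i\notin I_z$. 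For this index $f_i(z)=0$ and $f_i(a_i)=1$, whence $\lvert f_i(a_i)-f_i(z)\rvert=1>\tfrac12$ while $a_i\in V$ — a contradiction.

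The step I expect to be the main obstacle is precisely guaranteeing that $f_i(z)=0$ for enough indices: a priori $z$ could belong to many of the sets $U_i$, in which case the supports of the $f_i$ would accumulate at $z$, the family would not oscillate there, and the argument would collapse. The resolution is to feed the single compact set $\{z\}$ into condition (ii), which forces $z\in U_i$ for only finitely many $i$; discarding these finitely many indices is harmless, since in a $T_1$ space $z$ remains a cluster point of the remaining $a_i$, and this is exactly what separates the values $f_i(a_i)=1$ from $f_i(z)=0$ while keeping $K$ compact.
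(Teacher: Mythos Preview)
Your proof is correct. Note, however, that the paper does not give its own proof of this proposition: it is quoted from \cite{GKP} and used as a black box in the subsequent arguments. Your construction---Urysohn bumps $f_i$ supported in $U_i$, compactness of $\{f_i:i\in I\}\cup\{0\}$ in $C_k(X)$ from condition (ii), and failure of equicontinuity at $z$ obtained by applying (ii) to the singleton $\{z\}$ and then invoking (iii)---is precisely the natural (and essentially the only) argument for this statement, and it is the one given in the cited source. The only delicate point, which you correctly isolate and handle, is ensuring $f_i(z)=0$ for cofinitely many $i$; your use of (ii) with $C=\{z\}$ together with the $T_1$ property to discard the finite exceptional set is exactly right.
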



\section{ Proofs }


The following proposition plays a crucial role in the proof of Theorem \ref{t:Ascoli-strict-LF}.
\begin{proposition} \label{p:Ascoli-inductive}
Let $X= \underrightarrow{\lim}\, X_n$ be the inductive limit of a sequence $\{ X_n\}_{n\in\w}$ of metrizable groups such that $X_n$ is a closed non-open subgroup of $X_{n+1}$ for every $n\in\w$. If $X$ is an Ascoli space, then all the $X_n$ are locally compact.
\end{proposition}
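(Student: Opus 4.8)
The plan is to prove the contrapositive: assuming that some $X_{n_0}$ fails to be locally compact, I will produce a compact subset $\KK\subseteq\CC(X)$ together with a point at which $\KK$ is not equicontinuous, so that $X$ is not Ascoli. First I reduce to a uniform situation. Since a closed subgroup of a locally compact group is locally compact and each $X_n$ is a closed subgroup of $X_{n+1}$, if $X_{n_0}$ is not locally compact then no $X_n$ with $n\ge n_0$ is locally compact; passing to the cofinal sequence $\{X_n\}_{n\ge n_0}$ (which has the same inductive limit) I may therefore assume that every $X_n$ is a non-locally-compact metrizable group, that each $X_n$ is closed in $X$, and---since $X_n$ is closed and not open in $X_{n+1}$---that $X_n$ is nowhere dense in $X_{n+1}$, so that $X_{n+1}\setminus X_n$ is dense and, by first countability, every point of $X_{n+1}$ is a limit of a sequence lying in $X_{n+1}\setminus X_n$.

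The guiding observation is that the obstruction to being Ascoli is purely a \emph{gluing across levels} phenomenon. Indeed, the restriction map $\CC(X)\to\CC(X_n)$ is continuous, since every compact subset of $X_n$ is compact in $X$; hence for any compact $\KK\subseteq\CC(X)$ each restriction $\KK|_{X_n}$ is compact in $\CC(X_n)$, and as $X_n$ is metrizable, hence a $k$-space, hence Ascoli, every $\KK|_{X_n}$ is \emph{automatically} equicontinuous. Thus any compact $\KK$ is equicontinuous on each level, and $X$ can fail to be Ascoli only because the level-wise equicontinuity neighbourhoods of a point $z$ cannot be amalgamated into a single neighbourhood of $z$ in $X$. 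The strategy is to manufacture $\KK$ so that precisely this amalgamation fails at $z=e$.

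Concretely, I would build $\KK=\{h_k:k\in\w\}\cup\{h_\infty\}$ as a convergent sequence in $\CC(X)$. Using the density of $X_{n+1}\setminus X_n$ I select, transverse to each level, points witnessing large oscillation near $e$, and using the non-local compactness of $X_n$ I select at each level infinitely many mutually independent such directions; applying the Tietze extension theorem successively to the metrizable pairs $X_n\subseteq X_{n+1}$ I promote the corresponding bumps to genuine elements of $\CC(X)$. Compactness of $\KK$ is then controlled by Steenrod's Proposition~\ref{p:limit-omega-compact}: a compact $C\subseteq X$ lies in some $X_m$, so convergence $h_k\to h_\infty$ in the compact-open topology reduces to uniform convergence on the compacta of each fixed level, which I arrange by letting the $k$-th bump live essentially above level $k$ and escape every fixed $X_m$.

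The main obstacle is exactly the tension between these two demands, and it is where non-local compactness is indispensable. Witnessing non-equicontinuity at $e$ forces the points carrying large values to accumulate at $e$ in $X$, whereas compactness of $\KK$ forces the supports of the $h_k$ to escape every compact set, hence every level. In a metrizable level a set accumulating at $e$ already contains a sequence converging to $e$, whose closure is compact; on such a compact set the level-wise equicontinuity of $\KK$ would kill the oscillation. Consequently the accumulation at $e$ must be realized \emph{non-sequentially}, exploiting that $X$ is not Fr\'echet--Urysohn, and the heart of the construction is to place the large-value points so that $e$ lies in their closure while no compact set meets infinitely many supports. If instead the levels were locally compact, the level-wise equicontinuity neighbourhoods could be glued and $X$ would be Ascoli---as happens for $\phi=\underrightarrow{\lim}\,\IR^n$---so the argument must invoke non-local compactness in an essential way to defeat this gluing.
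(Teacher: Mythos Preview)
Your high-level strategy matches the paper's: argue by contradiction, manufacture a compact family in $\CC(X)$ that fails equicontinuity at $e$, and use Proposition~\ref{p:limit-omega-compact} to control compacta level-wise. The paper packages the endgame via Proposition~\ref{p:Ascoli-sufficient}: rather than building functions directly, it produces points $a_{i,k}$, open sets $U_{i,k}$, and shows $e\in\overline{\{a_{i,k}\}}$ while every compact set meets only finitely many $U_{i,k}$; the functions are then implicit in that criterion. Your Tietze-based plan is morally the same construction done by hand.

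However, there is a genuine gap in the plan as written. You propose a \emph{singly indexed} family $\KK=\{h_k\}\cup\{h_\infty\}$ with ``the $k$-th bump living essentially above level $k$.'' For non-equicontinuity at $e$ you then need, for every neighbourhood $V$ of $e$, some $k$ and some $x\in V$ with $|h_k(x)|$ large; such $x$ lies above level $k$. But any set $\{x_k\}$ with $x_k\in X\setminus X_{k-1}$ is closed and discrete in $X$ (its trace on each $X_m$ is finite), so by the very Steenrod argument you invoke, $e$ cannot lie in its closure. A single transversal sequence of bad points therefore cannot witness the failure of equicontinuity; the accumulation at $e$ and the escape from all compacta are genuinely incompatible for a one-parameter family.

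What the paper does---and what your outline gestures at with ``infinitely many mutually independent directions'' but does not implement---is a \emph{doubly indexed} construction $a_{i,k}=x_{i,k-1}\,y_{i,k}$. The vertical index $k$ is the level (with $y_{i,k}\in X_k\setminus X_{k-1}$, $y_{i,k}\to e$), while the horizontal index $i$ runs through a locally finite family $\{W_{i,k}\}_{i\in\w}$ inside a non-compact annulus of $X_0$; this is exactly where non-local-compactness is spent, via the non-pseudocompactness of those annuli. The family $\{U_{i,k}\}$ is then shown to be locally finite in each $X_m$ (the delicate Step~2 of the paper), which gives condition~(ii) of Proposition~\ref{p:Ascoli-sufficient}, while $e\in\overline{\{a_{i,k}\}}$ because for each $V$ one first picks $k$ large (so $x_{i,k-1}$ is in the small annulus) and then $i$ large (so $y_{i,k}\in V$). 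Your plan needs this two-parameter mechanism made explicit; as stated, the single sequence $\{h_k\}$ cannot do the job.
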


\begin{proof}
Suppose for a contradiction that there is $X_i$, say $X_0$, which is not locally compact. For every $i\in\w$ we denote by $\rho_i$ a left invariant metric on $X_i$ and set
\[
B_{n,i} := \{ x\in X_i: \rho_i (x,e)<2^{-n} \}, \quad n\in\w.
\]

{\em Step 1.} Consider the open base of neighborhoods $\{ B_{n,0} \}_{n\in\w}$ of the unit $e$ of $X_0$, so $\overline{B_{n+1,0}}\subseteq B_{n,0}$. Then there is a strictly increasing sequence $\{n_k\}_{k\in \w}$ such that $n_{k+1}>n_k+1$ and for every $k\in\w$, the set $\overline{B_{n_k,0}}\setminus B_{n_k+1,0}$ is not compact. (Indeed, otherwise, there is $n_0$ such that $\overline{B_{n,0}}\setminus B_{n+1,0}$ is compact for all $n\geq n_0$. Since $B_{n,0}$ converges to $e$, we obtain that $\overline{B_{n_0,0}}$ is compact. So $X_0$ is locally compact, a contradiction.)

Set $P_k:=\overline{B_{n_k,0}}\setminus B_{n_k+1,0}$. Then $P_k$ is metrizable and non-compact, and hence $P_k$ is not pseudocompact. By \cite[Theorem~3.10.22]{Eng}  there exists a locally finite collection $\{W_{n,k}\}_{n\in \w}$ of nonempty open subsets  of $P_k$. We may assume in addition that  every $W_{n,k}\subseteq \mathrm{Int}(P_k)$. Note that  the family
\[
\mathcal{W}_m := \{ W_{n,i}: n\in\w, i\leq m\}
\]
is also locally finite in $X_0$ for every $m\in\w$. For every $n,k\in\w$ pick arbitrarily a point $x_{n,k}\in W_{n,k}$.

{\em Step 2.} We claim that for every $k\geq 1$ there are
\begin{enumerate}
\item[{\rm (a)}] a one-to-one sequence $\{ y_{n,k}\}_{n\in\w}$ in $X_k\setminus X_{k-1}$ converging to the unit $e\in X$;
\item[{\rm (b)}] for every $n\in\w$, an open neighborhood $U_{n,k}$ of $a_{n,k} :=x_{n,k-1}\cdot  y_{n,k}$ in $X$;
\end{enumerate}
such that
\begin{enumerate}
\item[{\rm (c)}] $U_{n,k}\cap X_{k-1} =\emptyset$ for every $n\in\w$;
\item[{\rm (d)}] the family
\[
\mathcal{V}_k := \{ U_{n,i} \cap X_k: \; n\in\w, \, 1\leq i\leq k\}
\]
is locally finite in $X_k$.
\end{enumerate}

Indeed, for every $k\geq 1$, let $\{ y_{n,k}\}_{n\in\w}$ be an arbitrary one-to-one sequence in $X_k\setminus X_{k-1}$  converging to $e$ (such a sequence exists because $X_{k-1}$ is not open in $X_k$ by assumption). For every $k\geq 1$ and each $n\in\w$   choose an open symmetric neighborhood $V_{n,k}$ of $e$ in $X$ such that
\begin{enumerate}
\item[$(\alpha)$] $V_{n,k}^3 \cap X_i \subseteq B_{n,i}$ for every $0\leq i\leq n$;
\item[$(\beta)$] $\big( y_{n,k} \cdot V_{n,k}^3\big) \cap X_{k-1} =\emptyset$ (recall that $X_{k-1}$ is closed in $X$).
\end{enumerate}
For every $k\geq 1$ and each $n\in\w$   set
\[
a_{n,k}:= x_{n,k-1} y_{n,k} \; \mbox{ and }\; U_{n,k} :=a_{n,k} V_{n,k}.
\]
Clearly, (a) and (b) hold.  Also (c) holds since if $U_{n,k}\cap X_{k-1} \not=\emptyset$ for some $k\geq 1$  and $n\in\w$, then $x_{n,k-1} y_{n,k} v \in X_{k-1}$ for some $v\in V_{n,k}$. So $y_{n,k} v \in x_{n,k-1}^{-1} X_{k-1} =X_{k-1}$ that contradicts $(\beta)$.

Let us check (d): $\mathcal{V}_k$ is locally finite in $X_k$ for every $k\geq 1$. Fix $x\in X_k$ and consider the two possible cases.

{\em Case 1. Let $x\in X_k\setminus X_{0}$.} So $\rho_k(x, X_{0})>0$ as $X_0$ is closed in $X_k$. For every $1\leq i\leq k$, since $y_{n,i} \to e$ in $X$ and $x_{n,i-1}\in X_0$, the condition $(\alpha)$ implies (note that $V_{n,i} \cap X_k \subseteq B_{n,k}$ for every $1\leq i\leq k <n$)
\[
\lim_n \rho_k \big( U_{n,i} \cap X_k, X_0)=\lim_n \rho_k \big( y_{n,i}V_{n,i} \cap X_k, X_0)\leq \lim_n \rho_k \big( y_{n,i}B_{n,k}, e)= 0.
\]
Hence there is an open neighborhood $U_x$ of $x$ in $X$ such that $U_x\cap X_k$ intersects only with a finite subfamily of $\mathcal{V}_k$.

{\em Case 2. Let $x\in X_0$.} Choose an open symmetric neighborhood $U_x$ of $e$ in $X$ such that $xU_x^3 \cap X_0$ intersects only with a finite subfamily of $\mathcal{W}_k$. We claim that $xU_x \cap X_k$ intersects only a finite subfamily of $\mathcal{V}_k$. Indeed, assuming the converse we can find $1\leq i\leq k$ such that $(xU_x \cap X_k) \cap U_{n,i}\not=\emptyset$ for every $n\in I$, where $I$ is an infinite subset of $\w$. Then for every $n\in I$ there are $u_n\in X_k$, $t_n\in U_x$ and $z_n\in V_{n,i}$ such that
\[
u_n =x\cdot t_n = x_{n,i-1} y_{n,i} z_n.
\]
Note that $z_n= y_{n,i}^{-1} x_{n,i-1}^{-1} u_n \in V_{n,i}\cap X_k$ belongs to $U_x \cap X_k$ for all sufficiently large $n\in I$ by $(\alpha)$, and also $y_{n,i}\in U_x \cap X_k$ for all sufficiently large $n\in I$ because $y_{n,i}\to e$. So
\[
x_{n,i-1} = x\cdot \left( t_n  z_n^{-1} y_{n,i}^{-1}\right) \in \big( x U_x^3 \cap X_0\big) \cap W_{n,i-1}
\]
for all sufficiently large $n\in I$. But this contradicts the choice of $U_x$.

Cases 1 and 2 show that $\mathcal{V}_{k}$  is locally finite in $X_{k}$.

{\em Step 3}. Let us show that the families
\[
A:=\{ a_{i,k}: i\in\w, k\geq 1\}, \quad \U :=\{ U_{i,k}: i\in\w, k\geq 1\}
\]
and $z:=e$ satisfy (i)-(iii) of Proposition \ref{p:Ascoli-sufficient}. Indeed, (i) is clear. To check (ii) let $K$ be a compact subset of $X$. By Proposition \ref{p:limit-omega-compact}, there is $m\in\w$ such that $K\subseteq X_m$. So (c) implies that if $K\cap U_{n,i}\not=\emptyset$, then  $i\leq m$, and hence $U_{n,i}\cap X_m \in \mathcal{V}_m$. Since the family $\mathcal{V}_m$ is locally finite in $X_m$, we obtain that $K$ intersects only a finite subfamily of $\U$ that proves (ii).

To prove (iii) let $V$ be an open neighborhood of $e$. Take an open neighborhood $U$ of $e$ such that $U^2\subseteq V$, and choose $k_0\in\w$ such that $W_{i,k_0}\subseteq X_0 \cap U$ for every $i\in\w$.  So $x_{i,k_0}\in U$ for every $i\in\w$. Since $\lim_i y_{i,k_0+1} =e$ we obtain that $a_{i,k_0+1}=x_{i,k_0} y_{i,k_0+1} \in U\cdot U\subseteq V$ for all sufficiently large $i$. Thus $e\in \overline{A}$ and (iii) holds. Finally, Proposition \ref{p:Ascoli-sufficient} implies that the group $X$ is not Ascoli which is a desired contradiction.
\end{proof}


\begin{theorem} \label{t:Ascoli-inductive-1}
Let $X=\underrightarrow{\lim}\, X_n$ be the inductive limit of a sequence $\{ X_n\}_{n\in\w}$ of metrizable groups such that $X_n$ is a closed subgroup of $X_{n+1}$ for every $n\in\w$. Then the following assertions are equivalent:
\begin{enumerate}
\item[{\rm (i)}] $X$ is an Ascoli space;
\item[{\rm (ii)}] one of the following assertions holds:
\begin{enumerate}
\item[$(ii)_1$] there is $m\in\w$ such that $X_n$ is open in $X_{n+1}$ for every $n\geq m$, so $X$ is metrizable;
\item[$(ii)_2$] all the $X_n$ are locally compact, so $X$ contains an open $\mathcal{MK}_\omega$-subspace and hence $X$ is a sequential non-Fr\'{e}chet--Urysohn space.
\end{enumerate}
\end{enumerate}
\end{theorem}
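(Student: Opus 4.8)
The plan is to prove the equivalence by a dichotomy on how often $X_n$ fails to be open in $X_{n+1}$. Write $S:=\{n\in\w : X_n \text{ is not open in } X_{n+1}\}$; then assertion $(ii)_1$ is exactly the statement that $S$ is finite, so its negation is that $S$ is infinite. I would establish $(i)\Rightarrow(ii)$ and $(ii)\Rightarrow(i)$ separately, the first resting on Proposition \ref{p:Ascoli-inductive} after passing to a suitable subsequence, the second on the structural ``so'' clauses together with Ascoli's theorem (every $k$-space is Ascoli).

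For $(i)\Rightarrow(ii)$, assume $X$ is Ascoli; if $S$ is finite we are in case $(ii)_1$, so suppose $S$ is infinite. The key elementary observation is that non-openness propagates upward: if $X_{n_0}$ is not open in $X_{n_0+1}$, then $X_{n_0}$ is not open in $X_m$ for any $m>n_0$, for otherwise $X_{n_0}=X_{n_0}\cap X_{n_0+1}$ would be open in $X_{n_0+1}$. Hence, choosing any strictly increasing sequence $n_0<n_1<\cdots$ inside the infinite set $S$, each $X_{n_k}$ is closed and non-open in $X_{n_{k+1}}$. Since a subsequence is cofinal, $\underrightarrow{\lim}\,X_{n_k}=X$ as topological spaces, so Proposition \ref{p:Ascoli-inductive} applies to $\{X_{n_k}\}$ and yields that every $X_{n_k}$ is locally compact. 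Finally, each $X_j$ is a closed subgroup of some $X_{n_k}$, and a closed subspace of a locally compact space is locally compact; thus all $X_j$ are locally compact, which is $(ii)_2$.

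For $(ii)\Rightarrow(i)$ I would treat the two cases. If $(ii)_1$ holds, then $X_m$ is open in $X_n$ for all $n\ge m$ by transitivity of openness for subgroup inclusions, hence $X_m$ is open in $X$; an open metrizable subgroup makes $X$ first countable at $e$, so $X$ is metrizable by Birkhoff--Kakutani, therefore Fr\'{e}chet--Urysohn, therefore a $k$-space, therefore Ascoli. If $S$ is infinite and all $X_n$ are locally compact, I would construct the required open $\mathcal{MK}_\omega$-subgroup: pick compact symmetric neighborhoods $V_n$ of $e$ in $X_n$ with $V_n\subseteq V_{n+1}$, arranging in addition that whenever $n\in S$ the set $V_{n+1}$ contains a nontrivial sequence converging to $e$ inside $X_{n+1}\setminus X_n$. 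Put $H:=\bigcup_{n,m} V_n^m=\bigcup_k V_k^k$. Then $H$ is a subgroup (the $V_n$ are symmetric and increasing), it is open in $X$ because $H\cap X_n$ is a subgroup of $X_n$ containing the neighborhood $\Int_{X_n}(V_n)$, and, $H$ being open, $H=\underrightarrow{\lim}(H\cap X_n)$ with each $H\cap X_n$ a $\sigma$-compact locally compact metrizable group. Consequently $H$ is an $\mathcal{MK}_\omega$-group exhausted by the metrizable compacta $V_k^k$; the convergent sequences placed in the $V_{n+1}$ force $H\cap X_n$ to be non-open in $H\cap X_{n+1}$ for infinitely many $n$, so $H$ is not metrizable. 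By Proposition \ref{p:MKw-sequential}, $H$ is sequential and non-Fr\'{e}chet--Urysohn. Since $H$ is a clopen subgroup, $X$ is the topological sum of its cosets, each homeomorphic to $H$; thus $X$ is sequential (so a $k$-space, so Ascoli), and, $H$ being a non-Fr\'{e}chet--Urysohn subspace, $X$ is non-Fr\'{e}chet--Urysohn.

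I expect the main obstacle to be the construction and analysis of $H$ in the second case: verifying that the subspace topology inherited from $X$ really coincides with the $k_\omega$-topology of the exhaustion $\{V_k^k\}$ (so that $H$ is genuinely $\mathcal{MK}_\omega$), and ensuring the exhaustion captures enough of the non-openness of the original sequence to keep $H$ non-metrizable. One should also be mindful that $(ii)_1$ and $(ii)_2$ overlap, namely when all $X_n$ are locally compact and the sequence is eventually open; there $X$ is metrizable and locally compact, and the genuinely non-Fr\'{e}chet--Urysohn conclusion of $(ii)_2$ is operative only under the hypothesis that $S$ is infinite, which is exactly the case produced by the proof of $(i)\Rightarrow(ii)$.
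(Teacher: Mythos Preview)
Your proof is correct and follows essentially the same route as the paper: Proposition \ref{p:Ascoli-inductive} (after passing to a cofinal subsequence) for $(i)\Rightarrow(ii)$, and the Ascoli theorem via the ``so'' clauses for $(ii)\Rightarrow(i)$, including the construction of an open $\sigma$-compact subgroup to obtain the $\mathcal{MK}_\omega$ structure. Your version is more explicit---you justify the ``without loss of generality'' reduction, give a concrete description of the open subgroup $H$, and correctly flag the overlap between $(ii)_1$ and $(ii)_2$---but the underlying argument is the paper's.
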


\begin{proof}
(i)$\Rightarrow$(ii) If there is $m\in\w$ such that $X_n$ is open in $X_{n+1}$ for every $n\geq m$, then $X_m$ is an open subgroup of $X$. Thus $X$ is metrizable. Assume that for infinitely many $n\in\w$ the group $X_n$ is not open in $X_{n+1}$. Without loss of generality we can assume that $X_n$ is not open in $X_{n+1}$ for all $n\in\w$. Since $X$ is Ascoli, Proposition \ref{p:Ascoli-inductive} implies that all the $X_n$ are locally compact. Let $Y_n$ be an open $\sigma$-compact subgroup of $X_n$. We can assume that $Y_n \subseteq Y_{n+1}$ for every $n\in\w$. As all the $Y_n$ are metrizable, the group $Y:= \underrightarrow{\lim}\,  Y_n$ is an $\mathcal{MK}_\omega$-space, and hence $Y$ is a sequential non-Fr\'{e}chet--Urysohn space by Proposition \ref{p:MKw-sequential}. Clearly, $Y$ is an open subgroup of $X$. Thus $X$ is also a sequential non-Fr\'{e}chet--Urysohn space.

(ii)$\Rightarrow$(i) follows from the Ascoli theorem \cite{Eng}.
\end{proof}

 Now we prove Theorem \ref{t:Ascoli-strict-LF}.
\begin{proof}[Proof of Theorem \ref{t:Ascoli-strict-LF}]
For every $n\in\w$ the space $E_n$ is closed in $E_{n+1}$ as a complete subspace of a complete metrizable space. Taking into account that a locally convex space $E$ is locally compact if and only if $E$ is finite dimensional, the assertion follows from  Theorem \ref{t:Ascoli-inductive-1}.
\end{proof}

It is  mentioned in \cite[Footnote 2]{nyikos} that van Douwen has shown the following: if even one of the factors in the direct sum $X=(\bigoplus_{n\in\w} X_n, \TTT_b)$ of a sequence $\{ X_n\}_{n\in\w}$ of metrizable groups with the box topology $ \TTT_b$ is not locally compact and infinitely many of the $X_n$ are not discrete, then $X$ is not sequential. The next corollary  generalizes this result.
\begin{corollary} \label{c:Ascoli-direct-sum}
Let $\{ X_n\}_{n\in\w}$ be  a sequence  of metrizable groups such that infinitely many of the $X_n$ are not discrete and let $X=(\bigoplus_{n\in\w} X_n, \TTT_b)$ be the direct sum endowed with the box topology $ \TTT_b$. Then $X$ is an Ascoli space if and only if all the $X_n$ are locally compact. In this case $X$ has an open subgroup which is a sequential non-Fr\'{e}chet--Urysohn $\mathcal{MK}_\w$-space, and hence $X$ is also a sequential non-Fr\'{e}chet--Urysohn space.
\end{corollary}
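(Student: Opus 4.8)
The plan is to deduce the corollary from Theorem \ref{t:Ascoli-inductive-1} by recognizing the box-topology direct sum as a concrete instance of the inductive-limit setup already analyzed. Recall from the discussion preceding the corollary that if we set $\widetilde{X}_n := \bigoplus_{i\leq n} X_i$, then $X=(\bigoplus_{n\in\w} X_n, \TTT_b)=\underrightarrow{\lim}\, \widetilde{X}_n$. So the first step is to verify that the sequence $\{\widetilde{X}_n\}_{n\in\w}$ satisfies the hypotheses of Theorem \ref{t:Ascoli-inductive-1}: each $\widetilde{X}_n$ is a finite product of metrizable groups, hence metrizable, and $\widetilde{X}_n$ is a closed subgroup of $\widetilde{X}_{n+1}$ (it is the kernel of the projection onto the last coordinate $X_{n+1}$, equivalently it is the preimage of the closed point $e_{n+1}$ under a continuous homomorphism). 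Thus Theorem \ref{t:Ascoli-inductive-1} applies verbatim to this tower.

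Next I would rule out the metrizable alternative $(ii)_1$ of Theorem \ref{t:Ascoli-inductive-1} under our standing hypothesis that infinitely many $X_n$ are not discrete. Indeed, $\widetilde{X}_n$ is open in $\widetilde{X}_{n+1}$ precisely when the summand $X_{n+1}$ is discrete: openness of $\widetilde{X}_n=\widetilde{X}_n\times\{e_{n+1}\}$ in $\widetilde{X}_n\times X_{n+1}$ forces $\{e_{n+1}\}$ to be open in $X_{n+1}$. Since by assumption $X_{n+1}$ fails to be discrete for infinitely many $n$, there is no $m$ beyond which all inclusions $\widetilde{X}_n\subseteq\widetilde{X}_{n+1}$ are open. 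Hence $(ii)_1$ cannot hold, and by the theorem the Ascoli property for $X$ is equivalent to alternative $(ii)_2$, namely that all the groups $\widetilde{X}_n$ are locally compact.

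It remains to translate local compactness of the $\widetilde{X}_n$ into local compactness of the individual summands $X_n$. This is the only genuinely non-formal point, though it is elementary: a finite product of topological groups is locally compact if and only if each factor is locally compact, so $\widetilde{X}_n=\bigoplus_{i\leq n} X_i$ is locally compact for every $n\in\w$ exactly when every $X_n$ is locally compact. Combining this equivalence with the previous paragraph yields that $X$ is Ascoli if and only if all the $X_n$ are locally compact, which is the main assertion. Finally, when this condition holds, alternative $(ii)_2$ of Theorem \ref{t:Ascoli-inductive-1} directly supplies the remaining conclusions: $X$ contains an open $\mathcal{MK}_\omega$-subgroup, and therefore $X$ is a sequential non-Fr\'{e}chet--Urysohn space. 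I expect no serious obstacle here; the work has essentially all been done in Theorem \ref{t:Ascoli-inductive-1}, and the corollary is a matter of checking that the box-topology direct sum fits the hypotheses and that openness and local compactness decode correctly in terms of the summands.
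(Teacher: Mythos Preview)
Your proposal is correct and matches the paper's intended argument: the paper states this corollary without proof, treating it as immediate from Theorem~\ref{t:Ascoli-inductive-1} via the identification $(\bigoplus_{n\in\w} X_n,\TTT_b)=\underrightarrow{\lim}\,\widetilde{X}_n$ recorded in Section~\ref{sec-1}. Your checks that each $\widetilde{X}_n$ is metrizable and closed in $\widetilde{X}_{n+1}$, that $(ii)_1$ is excluded precisely because infinitely many $X_{n+1}$ are non-discrete, and that local compactness of $\widetilde{X}_n$ is equivalent to local compactness of the individual summands are exactly the routine verifications the paper leaves to the reader.
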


\begin{corollary} \label{c:Ascoli-direct-sum-LCS}
Let $E=\bigoplus_{n\in\w} E_n$ be the direct locally convex sum of a sequence $\{ E_n\}_{n\in\w}$ of nontrivial metrizable locally convex spaces. Then $E$ is Ascoli if and only if $E=\phi$.
\end{corollary}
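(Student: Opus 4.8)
The plan is to derive Corollary~\ref{c:Ascoli-direct-sum-LCS} from Corollary~\ref{c:Ascoli-direct-sum}, since the direct locally convex sum $E=\bigoplus_{n\in\w} E_n$ is, as an additive topological group, exactly the direct sum $(\bigoplus_{n\in\w} E_n, \TTT_b)$ with the box topology. I would first record this identification: the standard basis of absolutely convex neighborhoods of zero for the locally convex direct sum topology is given precisely by the sets $\big(\bigoplus_{n\in\w} E_n\big)\cap\prod_{n\in\w} U_n$ with $U_n$ ranging over absolutely convex zero-neighborhoods in $E_n$, which is the defining basis of $\TTT_b$ described in Section~\ref{sec-1}. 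Hence the two topologies coincide and the Ascoli property, being purely topological, transfers directly.

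Next I would verify that the hypotheses of Corollary~\ref{c:Ascoli-direct-sum} are met. Each $E_n$ is a nontrivial metrizable locally convex space, hence a metrizable group, and since $E_n\neq\{0\}$ is a vector space it is not discrete (a nontrivial vector space has no isolated points in any linear topology). Thus \emph{every} $E_n$ is non-discrete, so in particular infinitely many are, and Corollary~\ref{c:Ascoli-direct-sum} applies. It tells us that $E$ is Ascoli if and only if every $E_n$ is locally compact. The final link is the standard fact, already invoked in the proof of Theorem~\ref{t:Ascoli-strict-LF}, that a locally convex space is locally compact if and only if it is finite-dimensional. Therefore $E$ is Ascoli iff each $E_n$ is finite-dimensional.

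It then remains to show that ``each $E_n$ finite-dimensional'' is equivalent to ``$E=\phi$''. If every $E_n$ is finite-dimensional of dimension $d_n\ge 1$, then $E=\bigoplus_{n\in\w} E_n$ is an $\aleph_0$-dimensional locally convex space with the finest locally convex topology, which is precisely $\phi=\underrightarrow{\lim}\,\IR^n$ up to linear homeomorphism; regrouping the finite-dimensional summands into the standard filtration $\{\IR^n\}$ identifies $E$ with $\phi$. Conversely $\phi$ itself is of this form. I would state this identification but keep it brief, as it is the routine observation that any countable direct sum of nonzero finite-dimensional spaces carries the finest locally convex topology and is therefore the space $\phi$.

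The only point requiring a little care—and the one I would flag as the main (though minor) obstacle—is the clean passage between the group-theoretic statement of Corollary~\ref{c:Ascoli-direct-sum} and the locally-convex framing here: one must be sure the box topology $\TTT_b$ on the underlying group genuinely agrees with the locally convex direct sum topology, rather than merely being comparable to it. Once that identification is in hand, together with the non-discreteness of nontrivial vector spaces and the local-compactness criterion, the proof is a direct chain of equivalences and needs no further estimates.
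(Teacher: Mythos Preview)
Your proposal is correct and follows exactly the paper's approach: the paper's proof is a single sentence invoking Corollary~\ref{c:Ascoli-direct-sum} together with the fact that a locally convex space is locally compact iff it is finite-dimensional, and your write-up simply makes explicit the routine verifications (box topology $=$ locally convex direct sum topology for countably many summands, nontrivial vector spaces are non-discrete, and $\bigoplus_n E_n=\phi$ when each $E_n$ is finite-dimensional) that the paper leaves to the reader.
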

\begin{proof}
Taking into account that a locally convex space $E$ is locally compact if and only if $E$ is finite dimensional, the assertion follows from  Corollary \ref{c:Ascoli-direct-sum}.
\end{proof}

In particular, the space $\IR^\w \times \phi$ is not Ascoli, and hence the product of a metrizable space and a sequential space can be not Ascoli.

To prove Theorem \ref{t:strong-dual-Ascoli} we need two assertions.

Let $E$ be a locally convex space and $E'$ be its dual. Denote by $\tau_k$ and $\tau_{pc}$  the compact-open topology and the precompact-open topology on $E'$, respectively.

\begin{lemma} \label{l:precompact-indep}
Let $E$ be an infinite-dimensional barrelled space and let $\TTT$ be a locally convex topology on the dual space $E'$ such that  $\sigma(E',E) \leq \TTT\leq \tau_{pc}$ and the space $(E',\tau_{\TTT})$ is barrelled. Then for every neighborhood $U$ of zero in $E$ there is an absolutely convex neighborhood $V\subseteq U$ of zero in $E$ and a nonzero $\chi\in V^\circ$ such that $\IR \chi \cap U^\circ=\{ 0\}$.
\end{lemma}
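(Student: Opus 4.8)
The plan is to argue by contradiction, after first recording an elementary reformulation of the conclusion.

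\emph{Reformulation and reduction.} Since $E$ is locally convex, I may replace $U$ by a closed absolutely convex neighborhood $U'\subseteq U$: if I produce $V\subseteq U'$ and $\chi\in V^\circ$ with $\IR\chi\cap (U')^\circ=\{0\}$, then $U^\circ\subseteq (U')^\circ$ forces $\IR\chi\cap U^\circ=\{0\}$ as well. So assume $U$ is closed and absolutely convex. For a nonzero $\chi\in E'$ put $M_U(\chi):=\sup_{x\in U}|\langle\chi,x\rangle|$. Because $U^\circ$ is absolutely convex, $\IR\chi\cap U^\circ=\{t\chi: |t|\,M_U(\chi)\le 1\}$, and this set equals $\{0\}$ precisely when $M_U(\chi)=\infty$, i.e. when $\chi$ is \emph{unbounded} on $U$. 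Writing $E'_U:=\{\chi\in E': M_U(\chi)<\infty\}$, the assertion to be proved becomes: \emph{there is an absolutely convex neighborhood $V\subseteq U$ with $V^\circ\not\subseteq E'_U$}, and any witnessing $\chi\in V^\circ\setminus E'_U$ (automatically nonzero) does the job.

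\emph{The contradiction hypothesis forces $U^\circ$ to be absorbing.} Suppose no such $V$ exists, i.e. $V^\circ\subseteq E'_U$ for every absolutely convex neighborhood $V\subseteq U$. I claim this forces $E'=E'_U$. Indeed, given an arbitrary $\chi\in E'$, the set $W_\chi:=\{x\in E:|\langle\chi,x\rangle|\le 1\}$ is an absolutely convex neighborhood of zero, so $V:=W_\chi\cap U$ is an absolutely convex neighborhood contained in $U$; since $V\subseteq W_\chi$ we have $\chi\in V^\circ$, and the hypothesis gives $\chi\in E'_U$. Thus every continuous functional is bounded on $U$, which is exactly the statement that $U^\circ=\{\chi: M_U(\chi)\le 1\}$ is absorbing in $E'$ (each $\chi$ lies in $nU^\circ$ for $n>M_U(\chi)$).

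\emph{Using barrelledness of $(E',\TTT)$.} Now $U^\circ$ is a barrel in $(E',\TTT)$: it is absolutely convex, it is absorbing by the previous step, and it is $\sigma(E',E)$-compact by Alaoglu's theorem, hence $\sigma(E',E)$-closed and therefore $\TTT$-closed because $\sigma(E',E)\le\TTT$. Since $(E',\TTT)$ is barrelled, $U^\circ$ is a $\TTT$-neighborhood of zero, and as $\TTT\le\tau_{pc}$ it is a $\tau_{pc}$-neighborhood of zero. The basic $\tau_{pc}$-neighborhoods are the polars $S^\circ$ of precompact subsets $S\subseteq E$, so $S^\circ\subseteq U^\circ$ for some precompact $S$. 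Taking polars in $E$ and using the bipolar theorem yields $U\subseteq U^{\circ\circ}\subseteq S^{\circ\circ}$, where $S^{\circ\circ}$ is the closed absolutely convex hull of $S$. The absolutely convex hull of a precompact set is precompact and, being convex, its closure coincides with its $\sigma(E,E')$-closure $S^{\circ\circ}$; hence $S^{\circ\circ}$ is precompact and so is the neighborhood $U$. A locally convex space possessing a precompact neighborhood of zero is finite-dimensional (its completion is locally compact, so Riesz's theorem applies), contradicting the infinite-dimensionality of $E$. This contradiction proves the lemma.

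\emph{Main obstacle.} I expect the only real work to lie in the last paragraph: converting the purely topological statement ``$U^\circ$ is a $\tau_{pc}$-neighborhood'' into the quantitative ``$U$ is precompact'' via the bipolar theorem, and in particular justifying that $S^{\circ\circ}$ is precompact, which is where infinite-dimensionality is spent. The earlier steps are formal once one notices that failure of the conclusion is equivalent to $U^\circ$ being absorbing, since that is what turns $U^\circ$ into a barrel and lets the hypothesis on $(E',\TTT)$ bite; note that barrelledness of $E$ itself is not actually needed here, only the infinite-dimensionality of $E$ and the barrelledness of $(E',\TTT)$.
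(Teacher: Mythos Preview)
Your proof is correct and follows essentially the same approach as the paper: both reduce the conclusion to the statement that $U^\circ$ is not absorbing in $E'$, and derive a contradiction with infinite-dimensionality from the barrelledness of $(E',\TTT)$ once $U^\circ$ is a barrel. The only difference is in how the contradiction is extracted: the paper observes directly that $U^\circ$ is $\tau_{pc}$-compact (since $\sigma(E',E)$ and $\tau_{pc}$ agree on equicontinuous sets), hence $\TTT$-compact, so an absorbing $U^\circ$ would make $(E',\TTT)$ locally compact and therefore $E$ finite-dimensional---this bypasses your bipolar/precompactness detour back in $E$, and your closing remark that the barrelledness of $E$ is unused is correct and applies to the paper's argument as well.
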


\begin{proof}
By the Alaoglu theorem, the polar $U^\circ$ is $\sigma(E',E)$-compact, and hence $U^\circ$ is a $\tau_{pc}$-compact disc by Proposition 3.9.8 of \cite{horvath}. Therefore the $\TTT$-compact set $U^\circ$ is not absorbing (otherwise,  $U^\circ$ would be a compact neighborhood of zero in $(E',\TTT)$ by the barrelledness of $(E',\TTT)$, and hence $E$ is finite-dimensional). So there is $\chi\in E'$ such that $\IR \chi \cap U^\circ=\{ 0\}$. It remains to choose a neighborhood $V\subseteq U$  of zero such that $\chi\in V^\circ$.
\end{proof}

For a subspace $H$ of a locally convex space $E$ we set $H^\perp :=\{ \chi\in E': \chi|_H =0\}$.
\begin{proposition} \label{p:strong-dual-Ascoli}
Let  $E=\underrightarrow{\lim} \, E_n$ be a strict inductive limit of a sequence $\{ (E_n,\tau_n)\}_{n\in\w}$ of locally convex spaces such that $E_n$ is a closed proper subspace of $E_{n+1}$ for every $n\in\w$. Assume that $E_0$ is an infinite-dimensional barrelled metrizable space such that $(E'_0,\tau_k)$ is also barrelled. Then $(E',\tau_k)$ is not an Ascoli space.
\end{proposition}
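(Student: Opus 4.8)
The plan is to verify the sufficient criterion for non-Ascoliness in Proposition \ref{p:Ascoli-sufficient}, taking the cluster point $z$ to be the zero functional $0\in E'$ and building a doubly-indexed family $A=\{a_{n,k}:n\in\w,\ k\ge 1\}$ together with $\tau_k$-open sets $\{U_{n,k}\}$ around its members. Each $a_{n,k}$ will be a sum of two pieces with complementary roles: a piece over $E_0$ that escapes every polar (manufactured by Lemma \ref{l:precompact-indep}), and a piece lying in an annihilator $E_{k-1}^\perp$ that pushes the construction up the levels and separates the sets that share a common level.

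Concretely, I would first fix a decreasing base $\{U_k\}_{k\ge 1}$ of absolutely convex neighborhoods of $0$ in the metrizable space $E_0$ and apply Lemma \ref{l:precompact-indep} to obtain, for each $k$, a functional $\chi_k\in E_0'$ with $\IR\chi_k\cap U_k^\circ=\{0\}$; as observed in the lemma this forces $\sup_{U_k}|\chi_k|=\infty$, so every nonzero multiple $t\chi_k$ stays outside $U_k^\circ$ while $t\chi_k\to 0$ in $\tau_k$ as $t\to 0$. After extending $\chi_k$ to $E'$, and using that $E_{k-1}$ is a proper closed subspace of $E_k$ to pick $p_k\in E_k\setminus E_{k-1}$ and $\eta_{n,k}\in E_{k-1}^\perp$ with $\eta_{n,k}(p_k)=c_{n,k}\to\infty$ (Hahn--Banach), I set $a_{n,k}:=t_n\chi_k+\eta_{n,k}$ for a fixed sequence $t_n\downarrow 0$. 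Since $E_0\subseteq E_{k-1}$ for $k\ge 1$, the restriction of $a_{n,k}$ to $E_0$ is exactly $t_n\chi_k$, which avoids $U_k^\circ$; as $U_k^\circ$ is $\tau_k$-compact by Alaoglu's theorem, I can pick a compact $K_{n,k}\subseteq E_0$ and $\e_{n,k}>0$ separating $t_n\chi_k$ from $U_k^\circ$ in $(E_0',\tau_k)$, and define $U_{n,k}:=\{\psi\in E':\sup_{K_{n,k}\cup\{p_k\}}|\psi-a_{n,k}|<\e_{n,k}\}$.

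Conditions (i) and (iii) of Proposition \ref{p:Ascoli-sufficient} should then be routine. Condition (i) is immediate. For (iii), given a basic $\tau_k$-neighborhood of $0$ determined by a compact $K\subseteq E$, Proposition \ref{p:limit-omega-compact} gives $K\subseteq E_m$ for some $m$; choosing the level $k_0=m+1$ makes $\eta_{n,k_0}$ vanish on $K$, so $a_{n,k_0}|_K=t_n\chi_{k_0}|_K\to 0$ and infinitely many $a_{n,k_0}$ enter the neighborhood, whence $0\in\overline{A}$.

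\textbf{The main obstacle is condition (ii)}: that every $\tau_k$-compact $C\subseteq E'$ meets only finitely many $U_{n,k}$. Here the delicate point is that $E$ itself is not assumed barrelled, so a $\tau_k$-compact $C$ need not be globally equicontinuous; only its restriction to $E_0$ will be. I would split the count by level. Being $\tau_k$-compact, $C$ is $\sigma(E',E)$-bounded, so its restriction to the \emph{barrelled} space $E_0$ is equicontinuous and $\sup_{U_{m_0}}|\psi|\le 1$ for all $\psi\in C$ and some $m_0$. For every level $k\ge m_0$ one then has $\psi|_{E_0}\in U_k^\circ$, which by the choice of $K_{n,k}$ keeps $\psi$ out of $U_{n,k}$; hence all but the finitely many levels $k<m_0$ contribute nothing at all. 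For each of those remaining levels the separating direction $p_k$ does the work: membership of some $\psi\in C$ in $U_{n,k}$ forces $|\psi(p_k)|\ge c_{n,k}-|t_n\chi_k(p_k)|-\e_{n,k}$, which by $\sigma(E',E)$-boundedness of $C$ at the single point $p_k$ can hold for only finitely many $n$. Combining the two cases leaves finitely many pairs $(n,k)$, so (ii) holds and Proposition \ref{p:Ascoli-sufficient} yields that $(E',\tau_k)$ is not Ascoli.
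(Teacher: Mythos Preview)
Your proposal is correct and follows essentially the same strategy as the paper: apply Proposition~\ref{p:Ascoli-sufficient} with a doubly indexed family $a_{n,k}$ built from a piece $t_n\chi_k$ escaping the polars $U_k^\circ$ in $E_0'$ (via Lemma~\ref{l:precompact-indep}) plus a piece in $E_{k-1}^\perp$, then use barrelledness of $E_0$ to bound the level index and the annihilator piece to bound the remaining index. The only cosmetic differences are that the paper writes $a_{k,n}=\tfrac{1}{k}\eta_n+k\xi_n$ with a single $\xi_n$ scaled by $k$ and carries out the within-level finiteness via a uniform-discreteness estimate in a weak neighborhood $\widetilde W_n$, whereas you evaluate at a single point $p_k$; just make sure you take $\e_{n,k}\le 1$ (or choose $c_{n,k}$ after $\e_{n,k}$) so that $c_{n,k}-|t_n\chi_k(p_k)|-\e_{n,k}\to\infty$ actually holds.
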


\begin{proof}
We shall apply Proposition \ref{p:Ascoli-sufficient}. For every $n\in\w$, we denote by $S_n: E_n \to E$ the canonical embedding and observe that the adjoint map $S^\ast_n$ is weak* continuous 
and $\tau_k$-continuous. 

{\em Step 1. The basic construction.} By Lemma \ref{l:precompact-indep}, choose a decreasing base $\{ O_n\}_{n\in\w}$ of absolutely convex neighborhoods of zero in $E_0$ such that for every $n\geq 1$ there is a nonzero  $\chi_n\in O_n^\circ$ satisfying $\IR \chi_n \cap O_{n-1}^\circ=\{ 0\}$. For every $n,k\geq 1$ choose a $\sigma(E'_0,E_0)$-open neighborhood $W_{k,n}$ of zero in $E'_0$ such that
\begin{enumerate}
\item[{\rm (a)}] \; $\left(\frac{1}{k}\chi_n + W_{k,n}\right) \cap O_{n-1}^\circ =\emptyset$.
\end{enumerate}
For every $n\geq 1$, choose $\eta_n,\xi_n\in E'$ such that
\begin{enumerate}
\item[{\rm (b)}] \; $S_0^\ast (\eta_n)=\chi_n$;
\item[{\rm (c)}] \; $\xi_n\in E_{n-1}^\perp \setminus E_{n}^\perp$ (recall that $E_n/E_{n-1}$ is nontrivial and Hausdorff by the assumption of the proposition);
\end{enumerate}
and now choose  an absolutely convex $\sigma(E'_{n},E_{n})$-neighborhood $\widetilde{W}_{n}$ of zero in $E'_{n}$ such that
\begin{enumerate}
\item[{\rm (d)}] \; $S_{n}^\ast (\xi_n) \not\in 5\widetilde{W}_{n}$.
\end{enumerate}
For every $n,k\geq 1$, choose a $\tau_{k}$-open neighborhood $V_{k,n}$ of zero in $E'$ such that
\begin{enumerate}
\item[{\rm (e)}] \; $S_0^\ast( V_{k,n}) \subseteq W_{k,n}$,
\item[{\rm (f)}] \; $S_{n}^\ast (V_{k,n}) \subseteq \widetilde{W}_{n}$,
\end{enumerate}
and set
\[
a_{k,n}:= \frac{1}{k}\eta_n + k\xi_n \; \mbox{ and } \; U_{k,n}:= a_{k,n} + V_{k,n}.
\]
Define
\[
A:=\{ a_{k,n}: k,n\geq 1\}, \quad \U :=\{ U_{k,n}: k,n\geq 1\} \; \mbox{ and } \; z:= 0.
\]

\smallskip

{\em Step 2.}
We show that  $A$, $\U$ and $z$ satisfy (i)-(iii) of Proposition \ref{p:Ascoli-sufficient}. (i) is clear. To prove (ii), let $C$ be a compact subset of $(E',\tau_{k})$ and set
\[
I:=\{ (k,n)\in\NN\times\NN: C\cap U_{k,n} \not= \emptyset\}.
\]
We have to show that $I$ is finite. Since $E_0$ is barrelled, the $\sigma(E'_0,E_0)$-compact subset $S_0^\ast (C)$ of $E'_0$ is equicontinuous. So there is $m\geq 1$ such that $S_0^\ast (C) \subseteq O_m^\circ$. By (a)-(c) and (e) we obtain that $n\leq  m$ for every $(k,n)\in I$. Now for a fixed $n$, $1\leq n\leq m$, take $l\geq 1$ such that
\begin{equation} \label{equ:strong-dual-1}
S_{n}^\ast \left(\frac{1}{k}\eta_n\right) \in \widetilde{W}_{n}, \quad \forall k\geq l.
\end{equation}
We claim that
\[
S_{n}^\ast\left(a_{s,n} + y_s\right) - S_{n}^\ast \left(a_{t,n} +y_t\right)\not\in \widetilde{W}_{n}
\]
 for every $t>s\geq l$ and $y_s\in V_{k,n}$ and $y_t\in V_{t,n}$.
Indeed, denote by $\zeta$ the element in the left side and suppose for a contradiction that $\zeta\in \widetilde{W}_{n}$. Then (f) and (\ref{equ:strong-dual-1}) imply
\[
(t-s)S_{n}^\ast(\xi_n)= \zeta  - S_{n}^\ast \left(\frac{1}{s}\eta_n\right)  +S_{n}^\ast \left(\frac{1}{t}\eta_n\right)  -S_{n}^\ast(y_s) + S_{n}^\ast(y_t) \in 5  \widetilde{W}_{n}
\]
and therefore $S_{n}^\ast(\xi_n)\in \frac{5}{t-s} \widetilde{W}_{n}$ that contradicts (d) (recall that $ \widetilde{W}_{n}$ is absolutely convex). Since the compact set $S_{n}^\ast(C)$ cannot contain an infinite uniformly discrete subset, this means that the set $\{ k: (k,n)\in I\}$ is finite. Thus $I$ is finite and (ii) holds true.

To check (iii), let $U=K^\circ$ be a basic $\tau_{k}$-neighborhood of $z$, where $K$ is a compact subset of $E$. By Proposition \ref{p:limit-omega-compact}, there is $n\in \w$ such that $K\subseteq E_n$ (we identify $E_n$ with its image $S_n(E_n)$ in $E$). Take $k\geq 1$ sufficiently large such that $\frac{1}{k}\eta_{n+1} \in U$. Then, for every $x\in K$, (c) implies
\[
|a_{k,n+1}(x)|= \left| \left( \frac{1}{k}\eta_{n+1} +k\xi_{n+1}\right) (x) \right| =\left| \left( \frac{1}{k}\eta_{n+1}\right)(x)\right|\leq 1.
\]
Therefore $a_{k,n+1}\in U$ and hence $z$ belongs to the $\tau_k$-closure of $A$. Finally, Proposition \ref{p:Ascoli-sufficient} implies that $(E',\tau_k)$ is not an Ascoli space.
\end{proof}

Now we are ready to prove Theorem \ref{t:strong-dual-Ascoli}.

\begin{proof}[Proof of Theorem \ref{t:strong-dual-Ascoli}]
The `if' part is trivial. Assume that $E=\underrightarrow{\lim} \, E_n$  is an  infinite-dimensional Montel strict $(LF)$-space such that the strong dual $E'_\beta$ of $E$ is an Ascoli space. Note that for every $n\in\w$ the space $E_n$ is closed in $E_{n+1}$ as a complete subspace of a complete metrizable space, and the strong topology $\beta(E',E)$ coincides with the compact-open topology $\tau_k$ on $E'$ as $E$ is Montel.

If $E$ is not proper and $E=E_m$ for some $m\in\w$, then $E$ is a Fr\'{e}chet--Montel  space. Therefore $E'_\beta$ is a sequential non-Fr\'{e}chet--Urysohn  $\mathcal{MK}_\omega$-space by Proposition \ref{p:dual-non-FU-MK}.

Assume that $E$ is proper. If all $E_n$ are finite-dimensional, then $E=\phi$. If $E_n$ is infinite-dimensional for some $n\in\w$, say $E_0$, then $E_0$ is a Fr\'{e}chet--Montel space. Therefore its strong dual $(E'_0, \beta(E'_0,E_0))$ is also a Montel space and hence is barrelled. Since  $\tau_k=\beta(E'_0,E_0)$, the space $E'_\beta$ is not Ascoli by Proposition \ref{p:strong-dual-Ascoli}.
\end{proof}


\bibliographystyle{amsplain}

\end{document}